\let\mathcal\mathscr
\let\phi=\varphi
\let\kappa=\varkappa
\newcommand*{\ldb}{[\![}
\newcommand*{\rdb}{]\!]}
\DeclareMathOperator{\CDiff}{\mathcal{C}Diff}
\newcommand*{\eval}[1]{\left.#1\right|}
\newcommand*{\abs}[1]{\left|#1\right|}
\newcommand*{\Ev}{\mathbf{E}}
\theoremstyle{theorem}
\newtheorem{proposition}{Proposition}
\newtheorem{theorem}{Theorem}
\theoremstyle{definition}
\newtheorem{definition}{Definition}
\theoremstyle{remark}
\newtheorem{remark}{Remark}
\let\mathcal\mathscr
\newcommand{\cprime}{\/{\mathsurround=0pt$'$}}
\numberwithin{equation}{section}
\begin{document}

\title{On variational bivectors}
\author{I.S.~Krasil'shchik}
\address{Trapeznikov Institute of Control
    Sciences, 65 Profsoyuznaya street, Moscow 117997,
    Russia}\email{josephkra@gmail.com}
\thanks{Partially supported by the RSF Grant 25-71-20008}
\keywords{Partial differential equations, Hamiltonian
  structures, variational bivectors}
\subjclass[2020]{37K06, 35B06, 17B80, 37K10}
\begin{abstract}
  We construct examples of variational bivectors that are not Poissonian.
\end{abstract}
\date{\today}
\maketitle
%%%%%%%%%%%%
%\setcounter{tocdepth}{3}
%\tableofcontents
%\newpage
%%%%%%%%%%%%

\section{Introduction}
\label{sec:introduction}

It was proved in~\cite[Theorem~1]{integrability-conditions} that for
evolutionary equations $u_t = f$ with the non-degenerate symbol of the
right-hand side all variational multi-vectors of degree~$\ge 3$ are
trivial. In particular, it follows from this fact that any variational
bivector defines a Hamiltonian operator (a Poisson structure) for such an
equation (all the necessary definitions will be given in
Section~\ref{sec:vari-bivect}).

We show below that for the equations $u_{xy} = 0$, $u_{xx} + u_{yy}$, and
$u_{xyz} = 0$ (which can obviously be transformed to the evolutionary form,
but with the degenerate symbol) there do exist variational bivectors whose
Schouten bracket does not vanish. So, they are not Hamiltonian structures and
their brackets are nontrivial $3$-vectors.

\section{Variational bivectors}
\label{sec:vari-bivect}

In this section, we follow~\cite{GeomJetSp, KVV-Springer}.

Let $\mathcal{E}\subset J^\infty(\pi)$ be an infinitely prolonged equation,
where $\pi\colon E\to M$ is a vector bundle, see~\cite{Bocharov-et-al}. For
the sake of simplicity, we assume $M = \mathbb{R}^n$ and
$E = \mathbb{R}^m\times\mathbb{R}^n$ with the coordinates $x^1,\dots,x^n$ and
$u^1,\dots,u^m$ in~$ \mathbb{R}^n$ and~$ \mathbb{R}^m$, respectively. Assume
also that~$\mathcal{E}$ is defined by the relation $F = 0$, where
$F = (F^1,\dots,F^m)$ is a vector-function on~$J^\infty(\pi)$. Denote by~$P$
the module of such functions. Denote also by~$\kappa$ the module of sections
of the pull-back~$\pi_\infty^*(\pi)$, where $\pi_\infty\colon J^\infty(\pi)\to
M$. Let~$\mathcal{F}$ be the algebra of smooth functions on~$J^\infty(\pi)$
(or on~$\mathcal{E}$). We use the
notation~$\hat{\bullet} = \hom_{\mathcal{F}}(\bullet, \Lambda_h^n)$, where
$\Lambda_h^n$ is the module of horizontal $n$-forms. We also denote
by~$\CDiff(A, B)$ the module of all $\mathcal{C}$-differential operators
(i.e., operators in total derivatives) $A\to B$, $A$ ans~$B$ being
$\mathcal{F}$-modules.  Let~$\ell_F$ denote the linearization of~$F\in P$ and
$\ell_{\mathcal{E}} = \eval{\ell_F}_{\mathcal{E}}\colon\kappa\to P$.

We impose some regularity conditions on~$\mathcal{E}$:
\begin{enumerate}
\item $\mathcal{E}$ is a smooth submanifold in~$J^\infty(\pi)$;
\item $\mathcal{E}$ surjectively projects by~$\pi$ onto~$J^0(\pi)$;
\item if an ``object'' (function, differential form,
  $\mathcal{C}$-differential operator, etc.) $O\in\mathcal{O}$ admits a
  restriction~$\eval{O}_{\mathcal{E}}$ from~$J^\infty(\pi)$ to~$\mathcal{E}$
  and the restriction vanishes, then
  \begin{equation}
    \label{eq:1}
    O = \Delta(F), \qquad \Delta\in\CDiff(P, \mathcal{O});
  \end{equation}
\item the compatibility complex of~$\ell_{\mathcal{E}}$ is trivial.
\end{enumerate}

\begin{definition}
  Let~$\mathcal{E} = \{F = 0\}\subset J^\infty(\pi)$. Then
  $\mathcal{T}^*\mathcal{E} = \{F = 0, \ell_{\mathcal{E}}(p) = 0\}$ is called
  the cotangent equation of~$\mathcal{E}$, while
  $\mathrm{t}^*\colon \mathcal{T}^*\mathcal{E} \to \mathcal{E}$ is the
  cotangent covering. Here~$p = (p^1,\dots,p^m)$ is an odd variable of
  parity~$1$.
\end{definition}

\begin{definition}
  A $\mathcal{C}$-differential operator $H\colon \hat{P}\to\kappa$ is a
  (variational) bivector on~$\mathcal{E}$ if
  \begin{gather}
    \label{eq:2}
    \ell_{\mathcal{E}}(H) = 0,\\
    \label{eq:3}
    (\ell_{\mathcal{E}}\circ H)^* \equiv H^*\circ\ell_{\mathcal{E}}^* =
    \ell_{\mathcal{E}}\circ H.
  \end{gather} 
\end{definition}

\begin{remark}
  To any $H = (\sum_\sigma h_{ij}^\sigma D_\sigma)\in\CDiff(\hat{P}, \kappa)$,
  where $D_\sigma$ denotes the composition of total derivatives corresponding
  to the multi-index~$\sigma$, the vector-function
  $H_u = (\sum_{\sigma,j} h_{ij}^\sigma p_\sigma^j)$ on~$\mathcal{T}^*\mathcal{E}$
  corresponds. The expression~$\ell_{\mathcal{E}}(H)$ in Equation~\eqref{eq:2}
  is to be understood as~$\ell_{\mathcal{E}}(H_u)$.
\end{remark}

\begin{theorem}[cf.\ \cite{GeomJetSp, KVV-Springer}]
  Let $H$ be a bivector on~$\mathcal{E}$. Then there exists an element~$H_p$
  of parity~$2$ such that the evolutionary derivation~$\Ev_{\phi(H)}$ of
  parity~$1$ with the generating section~$\phi(H) = (H_u, H_p)$ is a symmetry
  of~$\mathcal{T}^*\mathcal{E}$.
\end{theorem}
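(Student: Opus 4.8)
The plan is to realise $\Ev_{\phi(H)}$ as a variational (Noether) symmetry of the odd variational functional whose Euler--Lagrange equation is $\mathcal{T}^*\mathcal{E}$. Let $S$ be the functional with horizontal Lagrangian density $\langle p, F\rangle = \sum_j p_j F^j$ (of parity~$1$) on the bundle with fibre coordinates $u^i$ and the odd $p^j$. Its Euler--Lagrange operators are $\delta S/\delta p = \pm F$ and $\delta S/\delta u = \ell_F^*(p)$, so the Euler--Lagrange system $\{\delta S/\delta u = 0,\ \delta S/\delta p = 0\}$ is precisely $\mathcal{T}^*\mathcal{E}$ (in the notation of the Definition, $\ell_F^*(p)$ is what is written $\ell_{\mathcal{E}}(p)$). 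I shall use the standard fact that a variational symmetry of an Euler--Lagrange equation is a symmetry of that equation: if an evolutionary derivation $\Ev_\chi$ is a variational symmetry of $S$, i.e.\ $\Ev_\chi\langle p, F\rangle$ is a total divergence, then applying the Euler operator $\mathbf{E}$ (for the tuple $(u,p)$) gives $0 = \mathbf{E}\bigl(\Ev_\chi\langle p, F\rangle\bigr) = \Ev_\chi\bigl(\mathbf{E}\langle p, F\rangle\bigr) + \ell_\chi^*\bigl(\mathbf{E}\langle p, F\rangle\bigr)$, hence $\Ev_\chi\bigl(\mathbf{E}\langle p, F\rangle\bigr) = -\ell_\chi^*\bigl(\mathbf{E}\langle p, F\rangle\bigr)$, which vanishes on $\mathcal{T}^*\mathcal{E} = \{\mathbf{E}\langle p, F\rangle = 0\}$ because $\ell_\chi^*$ is a $\mathcal{C}$-differential operator. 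Thus it suffices to find $H_p$ of parity~$2$ for which $\Ev_{(H_u, H_p)}\langle p, F\rangle$ is a total divergence.

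Because $\Ev_{(H_u, H_p)}$ is evolutionary, $F$ is independent of $p$, and $\Ev_{(H_u, H_p)}(F) = \ell_F(H_u)$, we get $\Ev_{(H_u, H_p)}\langle p, F\rangle = \langle H_p, F\rangle + \langle p, \ell_F(H_u)\rangle$. By Equation~\eqref{eq:2}, which by the Remark reads $\ell_{\mathcal{E}}(H_u) = 0$, the function $\ell_F(H_u)$ — formed after extending the coefficients $h^\sigma_{ij}$ of $H_u$ to $J^\infty(\pi)$, the $p^j$ being kept as parameters — restricts to zero on $\mathcal{E}$, so by regularity condition~(3) there is a $\mathcal{C}$-differential operator $\Delta_H\in\CDiff(P, P)$ with coefficients linear in $p$ such that $\ell_F(H_u) = \Delta_H(F)$. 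Integrating by parts, $\langle p, \Delta_H(F)\rangle = \langle\Delta_H^*(p), F\rangle + \operatorname{div}(\ldots)$, whence $\Ev_{(H_u, H_p)}\langle p, F\rangle = \langle H_p + \Delta_H^*(p), F\rangle + \operatorname{div}(\ldots)$. Setting $H_p := -\Delta_H^*(p)$, an element of parity~$2$ since $\Delta_H$ is linear in $p$, makes the right-hand side a pure divergence; by the previous paragraph $\Ev_{\phi(H)}$ with $\phi(H) = (H_u, H_p)$ is then a symmetry of $\mathcal{T}^*\mathcal{E}$.

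The step I expect to need the most care is the first paragraph: checking that $\mathcal{T}^*\mathcal{E}$ genuinely is the Euler--Lagrange equation of $\langle p, F\rangle$ and that variational symmetries of an Euler--Lagrange equation are symmetries, while tracking the signs produced by the odd variable $p$ and the fact that $\chi = (H_u, H_p)$ depends on both $u$ and $p$, so that $\ell_\chi^*$ couples the $u$- and $p$-components of $\mathbf{E}\langle p, F\rangle = \bigl(\ell_F^*(p),\ \pm F\bigr)$. The rest is routine: regularity condition~(3) delivers $\Delta_H$, and the integration-by-parts bookkeeping only produces admissible divergences. Note that $H_p$ is fixed only up to the freedom in $\Delta_H$; any such change alters $\Ev_{\phi(H)}$ by an evolutionary derivation vertical over $\mathcal{E}$ which is itself a variational symmetry of $S$, so the statement is unaffected. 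The skew-adjointness condition~\eqref{eq:3} is not needed for this existence argument; it is the part of the bivector structure that becomes relevant when one studies the Schouten bracket of $H$.
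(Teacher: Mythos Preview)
Your argument is correct but follows a genuinely different route from the paper's. The paper works directly with condition~\eqref{eq:3}: since $\ell_{\mathcal{E}}\circ H - H^*\circ\ell_{\mathcal{E}}^*$ vanishes on~$\mathcal{E}$, regularity yields a bi-differential operator $\nabla$ with $\ell_F(H(p)) - H^*(\ell_F^*(p)) = \nabla(F,p)$, and then simply \emph{declares} $H_p = -\tfrac{1}{2}\nabla^{*_1}(p,p)|_{\mathcal{E}}$, referring to \cite{GeomJetSp,KVV-Springer} for the verification that $(H_u,H_p)$ is a symmetry. You instead exhibit $\mathcal{T}^*\mathcal{E}$ as the Euler--Lagrange system of the odd Lagrangian $\langle p,F\rangle$ and build $H_p$ so that $\Ev_{(H_u,H_p)}$ is a \emph{variational} symmetry, which automatically descends to a symmetry of the equation; this uses only~\eqref{eq:2}. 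Your observation that~\eqref{eq:3} is not needed for mere existence is correct and conceptually interesting. What the paper's approach buys, however, is the explicit formula $H_p=-\tfrac{1}{2}\nabla^{*_1}(p,p)$, and this is exactly the formula invoked in every computation of Section~\ref{sec:examples} (see e.g.\ the derivations of~\eqref{eq:10} and~\eqref{eq:11}); your $H_p=-\Delta_H^*(p)$ is determined only up to the ambiguity in $\Delta_H$ and is less convenient for those calculations. Finally, your caution about the super-sign bookkeeping in the identity $\mathbf{E}\circ\Ev_\chi = \Ev_\chi\circ\mathbf{E} + \ell_\chi^*\circ\mathbf{E}$ is well placed: it does hold in the graded setting, but one must track the parity of $\chi$ when commuting it past $p$ and $\partial/\partial p$, so that step deserves an explicit line if you write this up.
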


\begin{proof}
  Using Equations~\eqref{eq:1} and~\eqref{eq:3}, we see that
  \begin{equation}\label{eq:8}
    \ell_F(H(p)) - H^*(\ell_F(p)) = \nabla(F, p),
  \end{equation}
  on the ambient~$J^\infty(\hat{\pi})$, where
  $\Delta\colon \kappa\times\hat{P}\to P$ is a bi-differential operator in
  total derivatives. Then
  \begin{equation*}\label{eq:7}
    H_p = \eval{-\frac{1}{2}\nabla^{*_1}(p, p)}_{\mathcal{E}}
  \end{equation*}
  is the desired element. Here~$\Delta^{*_1}$ denotes the adjoint to~$\Delta$
  with respect to the first argument.
\end{proof}

\begin{remark}
  Due to Equation~\eqref{eq:2}, the operator~$H$ takes symmetries
  of~$\mathcal{E}$ to its cosymmetries. Then, for two conservation
  laws~$\omega$ and~$\omega'$, one can define their
  bracket~$\{\omega, \omega'\}_H$ in the standard way:
  \begin{equation*}
    \mathcal{L}_{\delta(\omega)}(\delta(\omega')) = \delta(\{\omega,
    \omega'\}_H), 
  \end{equation*}
  where~$\mathcal{L}$ is the Lie derivative and $\delta\colon E_1^{0,n-1}\to
  E_1^{1,n-1}$ is the differential in the first term of Vinogradov's spectral
  sequence,~\cite{Vin-C-spec-DAN}.
\end{remark}

\begin{definition}
  For the two bivectors~$H$ and~$H'$, define their Schouten
  bracket~$\ldb H, H'\rdb$ by
  \begin{equation*}
    [\Ev_{\phi(H)}, \Ev_{\phi(H')}] = \Ev_{\ldb H, H'\rdb},
  \end{equation*}
  where~$[\cdot\,, \cdot]$ is the anti-commutator. A bivector is a Poisson
  structure if $\ldb H, H\rdb = 0$, i.e., if~$\Ev_{\phi(H)}$ is a nilpotent
  vector field. Two Poisson structures are compatible if~$\ldb H, H'\rdb = 0$.
\end{definition}

\begin{remark}
  The bracket $\ldb\cdot\,, \cdot\rdb_H$ can be computed by
  \begin{equation}
    \label{eq:4}
    \ldb H, H'\rdb = \Ev_{\phi(H)}(H') + \Ev_{\phi(H')}(H).
  \end{equation}
\end{remark}

\begin{theorem}
  If~$H$ is a Poisson structure, then~$\{\cdot\,, \cdot\}_H$ is a Poisson
  bracket on the space of conservation laws, i.e., the bracket is
  skew-symmetric and enjoys the Jacobi identity.
\end{theorem}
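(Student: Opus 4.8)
The plan is to realise $\{\cdot\,,\cdot\}_H$ as a bracket induced on~$\mathcal{E}$ by the derivation $Q := \Ev_{\phi(H)}$ on the cotangent equation~$\mathcal{T}^*\mathcal{E}$, and to deduce both required properties from the identity $Q^2 = 0$, which by the Definition is precisely $\ldb H,H\rdb = 0$.

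\emph{Set-up.} The generating function $\psi_\omega := \delta(\omega)$ of a conservation law~$\omega$ is a cosymmetry, $\ell_{\mathcal{E}}^*(\psi_\omega) = 0$, so by~\eqref{eq:2} the section $H(\psi_\omega)\in\kappa$ is a symmetry of~$\mathcal{E}$; put $X_\omega := \Ev_{H(\psi_\omega)}$. Since evolutionary derivations commute with the differential~$\delta$ of Vinogradov's sequence, the relation in the Remark reads $\{\omega,\omega'\}_H = X_\omega(\omega')$ on the level of conservation laws; by the regularity hypotheses (triviality of the compatibility complex of~$\ell_{\mathcal{E}}$, in particular) $\delta$ is injective on conservation laws modulo the ``topological'' ones, on which every $X_\omega$ acts by zero, so the bracket is thereby well defined. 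By the construction of~$\phi(H)$ (the first Theorem, which uses both~\eqref{eq:2} and~\eqref{eq:3}), $Q := \Ev_{\phi(H)}$ is a symmetry of~$\mathcal{T}^*\mathcal{E}$. On~$\mathcal{T}^*\mathcal{E}$, with its odd variable~$p$, write $Z_\psi := \Ev_{(0,\psi)}$ for the ``$p$-translation'' (the evolutionary derivation with generating section~$(0,\psi)$, which differentiates only in~$p$ with $p$-independent coefficients); one has the elementary relations $[Q,Q] = \Ev_{\ldb H,H\rdb} = 2Q^2$, $[Z_\psi,Z_{\psi'}] = 0$, and $[Q,Z_{\psi_\omega}] = \widetilde{X}_\omega$, the canonical lift of~$X_\omega$ to~$\mathcal{T}^*\mathcal{E}$. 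Moreover $[\widetilde{X}_\omega, Z_{\psi_{\omega'}}]$ is again a $p$-translation, of a cosymmetry which, matched against the Remark's definition, is $\delta\{\omega,\omega'\}_H$; thus $Z_{\delta\{\omega,\omega'\}_H} = [[Q,Z_{\psi_\omega}],Z_{\psi_{\omega'}}]$.

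\emph{Skew-symmetry and the key identity.} Skew-symmetry uses only that~$H$ is a bivector, not the Poisson condition: by the graded Jacobi identity and $[Z_{\psi_\omega},Z_{\psi_{\omega'}}] = 0$,
\begin{equation*}
  [[Q,Z_{\psi_\omega}],Z_{\psi_{\omega'}}] = [Z_{\psi_\omega},[Q,Z_{\psi_{\omega'}}]] = -[[Q,Z_{\psi_{\omega'}}],Z_{\psi_\omega}],
\end{equation*}
which says $\delta\{\omega,\omega'\}_H = -\delta\{\omega',\omega\}_H$, hence $\{\omega,\omega'\}_H = -\{\omega',\omega\}_H$ (this is also precisely the variational skew-symmetry encoded by~\eqref{eq:3}, and it is visible from the construction of~$H_p$ in the first Theorem). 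The key point is then
\begin{equation*}
  [\widetilde{X}_\omega,\widetilde{X}_{\omega'}] - \widetilde{X}_{\{\omega,\omega'\}_H}
  = -\tfrac12\bigl[[\Ev_{\ldb H,H\rdb},Z_{\psi_\omega}],Z_{\psi_{\omega'}}\bigr],
\end{equation*}
obtained from $\widetilde{X}_{\{\omega,\omega'\}_H} = [Q,[[Q,Z_{\psi_\omega}],Z_{\psi_{\omega'}}]]$ by two applications of the graded Jacobi identity, together with $[Q,[Q,Z_{\psi_\omega}]] = \tfrac12[[Q,Q],Z_{\psi_\omega}] = [Q^2,Z_{\psi_\omega}]$ and $Q^2 = \tfrac12\Ev_{\ldb H,H\rdb}$. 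In particular, \emph{if $\ldb H,H\rdb = 0$ then $[\widetilde{X}_\omega,\widetilde{X}_{\omega'}] = \widetilde{X}_{\{\omega,\omega'\}_H}$}, i.e.\ $\omega\mapsto X_\omega$ is a homomorphism of Lie algebras.

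\emph{Jacobi identity and the main obstacle.} Assume $\ldb H,H\rdb = 0$ and set $A_i := \widetilde{X}_{\omega_i}$, $Z_i := Z_{\psi_{\omega_i}}$. Under the map $\omega\mapsto Z_{\psi_\omega}$ the cyclic Jacobiator of $\{\cdot\,,\cdot\}_H$ is sent to $J := \sum_{\mathrm{cycl}}[A_1,[A_2,Z_3]]$. Expanding each summand by the graded Jacobi identity writes $J$ as $\sum_{\mathrm{cycl}}[[A_1,A_2],Z_3]$ plus a second cyclic sum which, by the skew-symmetry relation $[A_i,Z_j] = -[A_j,Z_i]$, equals $-J$; hence $J = \tfrac12\sum_{\mathrm{cycl}}[[A_1,A_2],Z_3]$. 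By the homomorphism property of the previous paragraph the latter sum is the image under $\omega\mapsto Z_{\psi_\omega}$ of $\sum_{\mathrm{cycl}}\{\{\omega_1,\omega_2\}_H,\omega_3\}_H$, which by skew-symmetry is $-(\text{the Jacobiator})$, i.e.\ equals $-J$; so $J = -\tfrac12 J$ and therefore $J = 0$, and since $\omega\mapsto Z_{\psi_\omega}$ is injective modulo topological conservation laws the Jacobiator vanishes. The genuinely technical point---and the place I expect resistance---is the chain of identifications in the Set-up: that $[\widetilde{X}_\omega, Z_{\psi_{\omega'}}]$ is indeed a $p$-translation of~$\delta$ of a conservation law (this uses that generating functions are ``gradients'', i.e.\ their linearisations are self-adjoint by the Helmholtz condition, not that they are arbitrary cosymmetries), that $\delta$ and $\omega\mapsto Z_{\psi_\omega}$ are injective enough---guaranteed by the regularity assumptions---to run the cancellation above, and keeping the parities and the ``modulo $d_h$-exact'' equalities mutually consistent. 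Once this dictionary is in place, everything else is the formal ``derived bracket of a square-zero odd derivation'' computation in the style of Kosmann-Schwarzbach.
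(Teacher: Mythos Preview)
The paper states this theorem without proof; it is quoted as a known fact from the references~\cite{GeomJetSp, KVV-Springer}, so there is no in-paper argument to compare against. Your proposal is therefore being assessed on its own merits.

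Your approach is the standard derived-bracket argument (in the style of Kosmann--Schwarzbach, as you note), and it is essentially the one given in the references the paper cites. Realising $\{\cdot,\cdot\}_H$ as the derived bracket of the odd nilpotent field $Q=\Ev_{\phi(H)}$ on $\mathcal{T}^*\mathcal{E}$ via the $p$-translations $Z_\psi$, then reading off skew-symmetry from the graded Jacobi identity and the Jacobi identity from $Q^2=0$, is exactly the right mechanism. The chain $Z_{\delta\{\omega,\omega'\}_H}=[[Q,Z_{\psi_\omega}],Z_{\psi_{\omega'}}]$ and the homomorphism identity $[\widetilde X_\omega,\widetilde X_{\omega'}]=\widetilde X_{\{\omega,\omega'\}_H}$ modulo $\ldb H,H\rdb$ are the correct intermediate statements, and your cyclic computation leading to $3J=0$ is clean.

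You have also correctly located where the genuine work lies: the identifications $[Q,Z_{\psi_\omega}]=\widetilde X_\omega$ and $[\widetilde X_\omega,Z_{\psi_{\omega'}}]=Z_{\delta\{\omega,\omega'\}_H}$ require, respectively, the explicit form of $H_p$ from the first Theorem (where~\eqref{eq:3} enters) and the Helmholtz-type self-adjointness $\ell_{\psi_\omega}^*=\ell_{\psi_\omega}$ that characterises Euler images among cosymmetries. These are the substantive verifications, and they are carried out in the cited references; your flagging of them is appropriate. One small caution: the injectivity you invoke for $\delta$ and for $\omega\mapsto Z_{\psi_\omega}$ is really injectivity modulo trivial (``topological'') conservation laws, and the bracket is only defined up to that ambiguity---you say this, but it is worth being explicit that the Jacobi identity is asserted at the level of equivalence classes.
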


\section{Examples}
\label{sec:examples}

We present here three examples of PDEs that admit bivectors which are not
Poisson structures. It is assumed everywhere that the jet order of the
operators are~$\leq$ the order of the equation.

\subsection{The wave equation}
\label{sec:equation-u_xy-=}

Consider the 2D wave equation in the form
\begin{equation}
  \label{eq:5}
  u_{xy} = 0.
\end{equation}
The cotangent covering is
\begin{equation*}
  \{p_{xy} = 0,\ u_{xy} = 0\}\to\{u_{xy} = 0\}.
\end{equation*}
Let
\begin{equation*}
  H = H_0 + H_1D_x + H_2D_y + H_3D_x^2 + H_4D_y^2,
\end{equation*}
where $H_i = H_i(x, y, u, u_x, u_y, u_{xx}, u_{yy})$. Then
\begin{gather*}
  \ell_{\mathcal{E}}\circ H = D_x\circ D_y\circ (H_0 + H_1D_x + H_2D_y +
  H_3D_x^2 + H_4D_y^2),\\
  H^*\circ \ell_{\mathcal{E}}^* = (H_0 - D_x\circ H_1 - D_y\circ H_2 +
  D_x^2\circ H_3 + D_y^2\circ H_4)\circ D_x\circ D_y.
\end{gather*}
Solving the system
\begin{equation*}
  \ell_{\mathcal{E}}\circ H = H^*\circ \ell_{\mathcal{E}}^*,\quad D_xD_y(H_u)
  = 0,
\end{equation*}
where $H_u = H(p) =  H_0p + H_1p_x + H_2p_y + H_3p_{xx} + H_4p_{yy}$, we get
the following solutions:
\begin{equation*}
  \label{eq:6}
  B_0 = p,\quad
  B_1 = \frac{1}{2}D_x(h_1) p_x + h_1 p_{xx},\quad
  B_2 = \frac{1}{2}D_y(h_2) p_y + h_2 p_{yy},
\end{equation*}
where $h_1= h_1(x, u_x)$, $h_2 = h_2(y, u_y)$.

\begin{remark}
  The bivector~$B_0$ is the identical Poisson structure, that exists
  on~$\mathcal{E}$, because~$\ell_{\mathcal{E}}$ is a self-adjoint
  operator. The bivectors~$B_1$ and~$B_2$ transform to each other by the
  discrete symmetry~$x\leftrightarrow y$ of~$\mathcal{E}$.
\end{remark}

\begin{proposition}
  The bivector~$B_1$ is a Poisson structure on Equation~\eqref{eq:5} if and
  only if~$\partial h_1/\partial u_x = 0$. Similarly\textup{,}~$B_2$ is a
  Poisson structure iff~$\partial h_2/\partial u_y = 0$. All these structures
  are pair-wise compatible.
\end{proposition}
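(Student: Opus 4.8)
The plan is to compute the Schouten self-bracket $\ldb B_1, B_1\rdb$ directly via formula~\eqref{eq:4}, which in this case reduces to $2\Ev_{\phi(B_1)}(B_1)$, and to check when it vanishes on the cotangent equation $\{p_{xy}=0,\ u_{xy}=0\}$. First I would make the symmetry $\Ev_{\phi(B_1)}$ explicit: its $u$-component is $B_1$ (written as the function $(B_1)_u = \tfrac12 D_x(h_1)p_x + h_1 p_{xx}$ on $\mathcal{T}^*\mathcal{E}$), and its $p$-component $H_p$ is produced by the first displayed Theorem, namely $H_p = -\tfrac12\nabla^{*_1}(p,p)|_{\mathcal{E}}$ where $\nabla$ is read off from $\ell_F(B_1(p)) - B_1^*(\ell_F(p)) = \nabla(F,p)$ with $F = u_{xy}$. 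Since $B_1$ only involves $x$-derivatives and $h_1=h_1(x,u_x)$, this computation is short: $\ell_F = D_xD_y$, and the needed bi-differential operator $\nabla$ comes out of commuting $D_xD_y$ past the coefficients of $B_1$.

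Next I would apply $\Ev_{\phi(B_1)}$ to the coefficients $H_0=0$, $H_1 = \tfrac12 D_x(h_1)$, $H_3 = h_1$ (and $H_2=H_4=0$) of $B_1$ regarded as a bivector, i.e. prolong the evolutionary field and act on $h_1(x,u_x)$ and its $x$-derivatives; because $h_1$ depends only on $x$ and $u_x$, the only surviving contribution is $\partial h_1/\partial u_x$ times $D_x$ of the $u$-component of the field. Collecting terms, I expect $\ldb B_1,B_1\rdb$ to be proportional to $\partial h_1/\partial u_x$, so that it vanishes identically (modulo the equations defining $\mathcal{T}^*\mathcal{E}$ and their differential consequences) precisely when $\partial h_1/\partial u_x = 0$; this gives both implications of the first claim. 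The statement for $B_2$ is then immediate from the discrete symmetry $x\leftrightarrow y$ noted in the preceding remark, which carries $B_1$ to $B_2$ and $h_1(x,u_x)$ to $h_2(y,u_y)$.

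For the pairwise compatibility I would compute the mixed brackets. First, $B_0 = p$ is the identity Poisson structure, and since $\Ev_{\phi(B_0)}$ is (essentially) the canonical field whose action on a bivector with coefficients independent of $p$ is controlled by $\ell_{\mathcal{E}}$-type identities, $\ldb B_0, B_1\rdb$ and $\ldb B_0, B_2\rdb$ vanish; this is really the statement that $\eval{\ell_F}_{\mathcal{E}}$ self-adjoint makes $B_0$ compatible with every bivector satisfying~\eqref{eq:2}--\eqref{eq:3}. For $\ldb B_1, B_2\rdb$ I would use~\eqref{eq:4}: $\Ev_{\phi(B_1)}(B_2) + \Ev_{\phi(B_2)}(B_1)$. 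Here $B_1$ is built from $x$-derivatives and a function of $(x,u_x)$, while $B_2$ is built from $y$-derivatives and a function of $(y,u_y)$; acting with one field on the other produces terms containing $u_{xy}$, $p_{xy}$, or their derivatives, all of which vanish on $\mathcal{T}^*\mathcal{E}$. So $\ldb B_1,B_2\rdb = 0$ unconditionally, completing the compatibility claim.

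The main obstacle I anticipate is bookkeeping rather than conceptual: correctly obtaining $H_p$ for $B_1$ and then organizing the prolonged evolutionary derivation so that the reductions modulo $u_{xy}=0$, $p_{xy}=0$ and their total derivatives are done consistently — it is easy to drop or double-count a term of the form $D_x D_y(\text{something})$. A secondary subtlety is making sure that "$\ldb B_1,B_1\rdb$ is proportional to $\partial h_1/\partial u_x$" is genuinely an identity of $3$-vectors and not merely true after some further restriction; concretely, I would exhibit the nonzero $3$-vector explicitly (it should be a multiple of $h_1''\, p_x p_{xx}$-type or $\partial_{u_x}h_1 \cdot p_x p_{xx} p_{xxx}$-type expression) so that the "only if" direction is unambiguous. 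Once that explicit $3$-vector is in hand, both directions of the Proposition and all three compatibility statements follow by inspection.
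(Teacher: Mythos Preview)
Your plan matches the paper's proof in all essentials: compute $(B_1)_p$ via the bi-differential operator~$\nabla$ of~\eqref{eq:8}, then evaluate $\ldb B_1,B_1\rdb = 2\Ev_{\phi(B_1)}((B_1)_u)$ explicitly and observe that every term carries a factor of $h_1' = \partial h_1/\partial u_x$ or a total $x$-derivative thereof. The paper does exactly this, obtaining $(B_1)_p = \tfrac12 h_1' p_x p_{xx}$ and an explicit quadratic-in-$p$ expression for the bracket whose highest-order term is $-h_1 h_1'\, p_{xx}p_{xxx}$; the condition $h_1h_1'=0$ already forces $h_1'=0$, settling both directions. Your symmetry argument for $B_2$ and your ``separated variables'' argument for $\ldb B_1,B_2\rdb = 0$ are correct and slightly more conceptual than the paper's bare ``straightforward computation''.

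There is, however, one genuine error in your compatibility discussion: it is \emph{not} true that the identity bivector $B_0 = p$ is compatible with every bivector satisfying~\eqref{eq:2}--\eqref{eq:3}. A direct computation gives
\[
\ldb B_0, B_1\rdb_u \;=\; \Ev_{(p,0)}\bigl((B_1)_u\bigr) + (B_1)_p \;=\; h_1'\,p_x p_{xx},
\]
which is nonzero whenever $h_1' \neq 0$. Self-adjointness of $\ell_{\mathcal{E}}$ only makes $B_0$ a bivector (indeed a Poisson structure); it does not force $B_0$ to Schouten-commute with arbitrary bivectors. This does not damage the Proposition, since ``these structures'' are the \emph{Poisson} structures, i.e., $B_1$ with $h_1'=0$ and $B_2$ with $h_2'=0$, and then $\ldb B_0,B_1\rdb$ vanishes trivially. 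But you must actually compute this bracket rather than invoke a false general principle.

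One minor correction: the $u$-component of a $3$-vector is quadratic in the odd variables $p_\sigma$, not cubic, so your anticipated ``$p_x p_{xx} p_{xxx}$-type'' term will not appear. The paper's explicit $\ldb B_1,B_1\rdb$ is a combination of $p_x p_{xx}$, $p_x p_{xxx}$, and $p_{xx}p_{xxx}$.
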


\begin{proof}
  Let us first find the operator~$\nabla$ from
  Equation~\eqref{eq:8}. Extending all necessary objects to the
  space~$J^\infty(\hat{\pi})$, where
  $\hat{\pi}\colon \mathbb{R}^4(p, u, x, y) \to \mathbb{R}^2(x, y)$, by
  tedious but straightforward computations we find that
  \begin{equation*}
    \ell_F\circ H- H^*\circ\ell_F^* = D_x(F)A_1 + FA_0,
  \end{equation*}
  where
  \begin{equation*}    
    A_1 =
    \frac{\left(\dfrac{\partial h_1'}{\partial x} +
          \dfrac{\partial 
            h_1'}{\partial u_x} u_{xx}\right) p_x}{2} + h_1'
      p_{xx}\\
  \end{equation*}
    and
    \begin{align*}
      A_0
      &= 
        \left(\frac{\partial^2 h_1'}{\partial x\partial u_x} u_{xx}
        + \frac{\dfrac{\partial^2h_1'}{\partial x^2}
        + \dfrac{\partial^2h_1'}{\partial^2u_x^2}u_{xx}^2
        + \dfrac{\partial h_1'}{\partial u_x}u_{xxx}}{2}
        \right) p_x\\
      &+ \frac{3}{2}\left(
        \frac{\partial h_1'}{\partial x} + \frac{\partial h_1'}{\partial u_x}
        u_{xx} 
        \right) p_{xx} + h_1' p_{xxx}.
    \end{align*}
  Here $h_1' = \partial h_1/\partial u_x$. Consequently,
  \begin{equation*}
    \nabla^{*_1}(\cdot, p) =
    -\frac{1}{2}D_x(\cdot)\left(
      \left(\frac{\partial h_1'}{\partial x} + \frac{\partial h_1'}{\partial
          u_x}u_{xx}\right) p_x 
      + 2h_1'p_{xx}\right)
  \end{equation*}
  and
  \begin{equation*}
    \nabla^{*_1}(p, p) =
    -\frac{1}{2}p_x\left(
      \left(\frac{\partial h_1'}{\partial x} + \frac{\partial h_1'}{\partial
          u_x}u_{xx}\right) p_x 
      + 2h_1'p_{xx}\right) = -h_1'p_xp_{xx},
  \end{equation*}
  since $p_x\cdot p_x = 0$. Hence the components of the symmetry~$\phi(H_1)$
  are
  \begin{equation*}
    H_u = \frac{1}{2}D_x(h_1) p_x + h_1 p_{xx},\quad
    H_p = \frac{1}{2}\frac{\partial h_1}{\partial u_x}p_xp_{xx}.
  \end{equation*}
  Let us now compute $\ldb H_1, H_1\rdb$. One has by Equation~\eqref{eq:4}:
  \begin{multline*}
    \ldb B_1, B_1\rdb = 2\Ev_{\phi(H_1)}(H_u) \\= 2\left(D_x(H_p)\frac{\partial
        H_u}{\partial p_x} + D_x^2(H_p)\frac{\partial
        H_u}{\partial p_{xx}} + D_x(H_u)\frac{\partial
        H_u}{\partial u_x} + D_x^2(H_u)\frac{\partial
        H_u}{\partial u_{xx}}\right)\\
    = \frac{1}{2}\big(D_x(h_1')p_xp_{xx} + h_1'p_xp_{xxx}\big)D_x(h_1)\\
    + \big(D_x^2(h_1')p_xp_{xx} + 2D_x(h_1')p_xp_{xxx} + h_1'p_{xx}p_{xxx} +
    h_1'p_xp_{xxxx}\big)h_1\\
    + \big(D_x^2(h_1)p_x + 3D_x(h_1)p_{xx} +
    2h_1p_{xxx}\big)\big(\frac{1}{2}D_x(h_1')p_x + h_1'p_{xx}\big)\\
    + \frac{1}{2}\big(D_x^3(h_1)p_x + 4D_x^2(h_1)p_{xx} + 5D_x(h_1)p_{xxx} +
    2h_1p_{xxxx}\big)h_1'\\
    = -h_1h_1'p_{xx}p_{xxx} + (h_1D_x(h_1') - 2D_x(h_x)h_1')p_xp_{xxx}\\ +
    \big(h_1D_x^2(h_1') - D_x(h_1)D_x(h_1') - D_x^2(h_1)h_1'\big)p_xp_{xx},
  \end{multline*}
  which proves the result for~$H_1$. The proof for $H_2$ is the same; to check
  compatibility is a quite straightforward computation.
\end{proof}

We shall omit technical details in what follows.

\subsection{A third-order equation}
\label{sec:equation-u_xyz-=}

Consider the equation
\begin{equation}
  \label{eq:9}
  u_{xyz} = 0
\end{equation}
and find bivectors of the form
\begin{multline*}
  H = H_0p + H_1p_x + H_2p_y + H_3p_z + H_4p_{xx} + H_5p_{xy} + H_6p_{xz} +
  H_7p_{yy} + H_8p_{yz}\\
  + H_9p_{zz} + H_{10}p_{xxx} + H_{11}p_{xxy} + H_{12} p_{xxz} +
  H_{13} p_{xyy} + H_{14} p_{xzz}\\
  + H_{15} p_{yyy} + H_{16} p_{yyz} + H_{17} p_{yzz} + H_{18} p_{zzz}, 
\end{multline*}
where  $H_i$ depend on
\begin{multline*}
  x, y, z, u, u_x, u_y, u_z, u_{xx}, u_{xy}, u_{xz}, u_{yy}, u_{yz},
  u_{zz},\\ u_{xxx}, u_{xxy}, u_{xxz}, u_{xyy}, u_{xzz}, u_{yyy}, u_{yyz},
  u_{yzz}, u_{zzz}. 
\end{multline*}
By direct computations, the following solutions are found
\begin{gather*}
  B_1 = h p_x,\\
  B_2 = \frac{\partial h}{\partial x}p_{xx} + h p_{xxx},\\
  B_3 = \frac{1}{2}D_x(g) p_{xy} + g p_{xxy},\\
  B_4 = \frac{1}{2}\left(\frac{\partial g}{\partial x} u_{xyy} -
    \frac{\partial g}{\partial y} u_{xxy}\right) p_{xy} + g(u_{xyy} p_{xxy} -
  u_{xxy} p_{xyy}),
\end{gather*}
and another eight obtained from the above presented by the transpositions
$x\leftrightarrow y\leftrightarrow z\leftrightarrow x$. Here~$h = h(x)$, $g =
g(x, y, u_{xy})$.

\begin{proposition}
  The bivectors~$B_1$, $B_2$, and~$B_3$ are Poisson structures on
  Equation~\eqref{eq:9}. The bivector~$B_4$ is a Poisson structures if and
  only if~$g = 0$. All such structures are pair-wise compatible.
\end{proposition}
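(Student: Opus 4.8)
The plan is to run, for each of the four bivectors $B_1,\dots,B_4$, the computation carried out in Section~\ref{sec:equation-u_xy-=}. First I would extend all the objects to the ambient space $J^\infty(\hat{\pi})$, $\hat{\pi}\colon\mathbb{R}^{5}(p,u,x,y,z)\to\mathbb{R}^{3}(x,y,z)$ with $p$ odd of parity~$1$, and determine the bidifferential operator $\nabla$ of Equation~\eqref{eq:8} for each $B_i$, that is, present
\[
  \ell_F\circ B_i-B_i^{*}\circ\ell_F^{*}=\nabla_i(F,p);
\]
here $F=u_{xyz}$, $\ell_F=\ell_{\mathcal{E}}=D_xD_yD_z$, $\ell_F^{*}=-D_xD_yD_z$, and the cotangent equation is $\{u_{xyz}=0,\ p_{xyz}=0\}$, so the ``right-hand side'' to be factored out consists of $u_{xyz}$, $p_{xyz}$ and their total derivatives. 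Then I would read off $H_p^{(i)}=\eval{-\frac{1}{2}\nabla_i^{*_1}(p,p)}_{\mathcal{T}^{*}\mathcal{E}}$, collapsing ``diagonal'' monomials by $p_\sigma\cdot p_\sigma=0$, form $\phi(B_i)=(B_i,H_p^{(i)})$, and evaluate the Schouten square by~\eqref{eq:4},
\[
  \ldb B_i,B_i\rdb=2\Ev_{\phi(B_i)}(B_i),
\]
reducing the outcome modulo $u_{xyz}=0$, $p_{xyz}=0$ and their differential consequences; all these computations involve only $x$ and $y$, since $z$ is inert on the cotangent equation.

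For $B_1=h\,p_x$ and $B_2=(\partial h/\partial x)\,p_{xx}+h\,p_{xxx}$ with $h=h(x)$ the coefficients depend on $x$ only; a short computation gives $\ell_F\circ B_i=B_i^{*}\circ\ell_F^{*}$ identically, hence $\nabla_i=0$ and $H_p^{(i)}=0$. Since $B_i$ then involves no $u$-jets while its $p$-part is not evolved, $\Ev_{\phi(B_i)}$ annihilates $B_i$, so $\ldb B_1,B_1\rdb=\ldb B_2,B_2\rdb=0$ and both are Poisson structures. For $B_3=\frac{1}{2}D_x(g)\,p_{xy}+g\,p_{xxy}$, $g=g(x,y,u_{xy})$, there is genuine dependence on $u_{xy}$, so $H_p^{(3)}$ has to be computed honestly; $\nabla_3$ is supported on monomials each carrying a factor from the ideal of $\mathcal{T}^{*}\mathcal{E}$ (a total derivative of $u_{xyz}$, or $p_{xyz}$), so after contraction and the reduction $p_\sigma p_\sigma=0$ the element $H_p^{(3)}$ is under control. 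The delicate step --- and I expect it to be the main obstacle --- is to verify that every monomial produced in $2\Ev_{\phi(B_3)}(B_3)$ either cancels against another or contains a factor of $u_{xyz}$, $p_{xyz}$ or one of their derivatives, so that $\ldb B_3,B_3\rdb=0$: in contrast with the analogous bivector on $u_{xy}=0$, whose Schouten square vanishes precisely when $\partial h_1/\partial u_x=0$, here the third independent variable $z$ kills the potential obstruction. Concretely this means tracking a lengthy computation with third- and fourth-order jets and checking that the obstruction lands in that ideal.

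For $B_4=\frac{1}{2}\big((\partial g/\partial x)u_{xyy}-(\partial g/\partial y)u_{xxy}\big)p_{xy}+g\big(u_{xyy}p_{xxy}-u_{xxy}p_{xyy}\big)$, $g=g(x,y,u_{xy})$, I would run the same recipe --- compute $H_p^{(4)}$, then $2\Ev_{\phi(B_4)}(B_4)$ by~\eqref{eq:4} --- and expect a nonzero $3$-vector whose $u$-component contains a term proportional to $g$ times a nonvanishing monomial in $p_{xy},p_{xxy},p_{xyy}$ and the jets $u_{xyy},u_{xxy}$: the ``Wronskian-type'' combination $u_{xyy}p_{xxy}-u_{xxy}p_{xyy}$ admits no further cancellation and does not lie in the ideal of $\mathcal{T}^{*}\mathcal{E}$. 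Hence $\ldb B_4,B_4\rdb=0$ forces $g=0$, while $g=0$ gives $B_4=0$, so $B_4$ is a Poisson structure if and only if $g=0$.

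Finally, for pairwise compatibility I would evaluate $\ldb B_i,B_j\rdb=\Ev_{\phi(B_i)}(B_j)+\Ev_{\phi(B_j)}(B_i)$ by~\eqref{eq:4} for each pair --- routine once $H_p^{(1)}=H_p^{(2)}=0$ and $H_p^{(3)},H_p^{(4)}$ are in hand --- and check that it vanishes on $\mathcal{T}^{*}\mathcal{E}$. The remaining eight bivectors are obtained from $B_1,\dots,B_4$ by the cyclic permutations of $x,y,z$, which act as symmetries of $\mathcal{E}$ and of its cotangent covering; hence all the corresponding assertions --- Poisson for the analogues of $B_1$, $B_2$, $B_3$, Poisson iff $g=0$ for the analogues of $B_4$, and pairwise compatibility of all of them --- follow at once.
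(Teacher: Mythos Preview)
Your overall strategy---extend to $J^\infty(\hat\pi)$, compute the bidifferential
operator~$\nabla_i$ from~\eqref{eq:8}, read off
$H_p^{(i)}=-\tfrac12\,\nabla_i^{*_1}(p,p)$, and then evaluate the Schouten
brackets via~\eqref{eq:4}---is exactly the paper's approach, and your
treatment of $B_1$ and $B_2$ is fine.

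The gap is in your argument for~$B_3$.  You yourself observe that ``all these
computations involve only $x$ and $y$, since $z$ is inert on the cotangent
equation'', and then a few lines later assert that ``the third independent
variable $z$ kills the potential obstruction''.  These two claims cannot both
hold: since $g=g(x,y,u_{xy})$ and $B_3$ involve no $z$-derivatives, every
monomial produced in $2\Ev_{\phi(B_3)}(B_3)$ lives in the subalgebra generated
by $x$-- and $y$--jets of $u$ and $p$, and the ideal of
$\mathcal{T}^{*}\mathcal{E}$ is generated by $u_{xyz}$, $p_{xyz}$ and their
total derivatives, all of which carry at least one $z$.  No such term can
therefore be absorbed into that ideal.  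The paper carries the computation
through and finds
\[
  H_{3,p}=\tfrac12\,g'\,p_{xxy}p_{xy},\qquad
  \ldb B_3,B_3\rdb = 2gg'\,p_{xxxxyy}\,p_{xy}+O(5),
\]
with $g'=\partial g/\partial u_{xy}$; the displayed top-order piece is
manifestly not in the ideal.  So the mechanism you propose for the vanishing
of $\ldb B_3,B_3\rdb$ does not exist, and your plan would not close this
case.  (Note that the paper's own computation is thus in tension with the
statement of the Proposition for~$B_3$; at any rate, the ``$z$ kills the
obstruction'' heuristic is not available.)

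For~$B_4$ your expectation is qualitatively right, but the paper's argument
is sharper: it isolates the unique highest-order contribution
\[
  -2g^{2}\bigl(u_{xyy}\,p_{xxxxyy}\,p_{xyy}+u_{xxy}\,p_{xxyyyy}\,p_{xxy}\bigr)
\]
in $\ldb B_4,B_4\rdb$, which is proportional to~$g^{2}$ (not to~$g$ as you
write) and again carries no $z$-derivatives, hence cannot be cancelled or
reduced.  This immediately yields $g=0$.
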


\begin{proof}
  Let us describe the operator~$\nabla$ for~$B_3$ and~$B_4$ first. One has
  for~$B_3$:
  \begin{equation*}
    \nabla_3(\cdot, p)= (\cdot)A_0(p) + D_x(\cdot)A_1(p) + D_y(\cdot)A_2(p) +
    D_xD_y(\cdot)A_{12}(p), 
  \end{equation*}
  where
  \begin{align*}
    A_0(p)
    &= \frac{1}{2}D_x^2D_y(g')p_{xy}
      + \frac{3}{2}D_xD_y(g') p_{xxy}+ \frac{1}{2}D_x^2(g')p_{xyy}\\
    &+ \frac{3}{2}D_x(g')p_{xxyy} + D_y(g') p_{xxxy}+ g' p_{xxxyy},\\
    A_1(p)
    &= \frac{1}{2}(D_xD_y(g')p_{xy} + D_x(g')p_{xyy} + D_y(g')p_{xxy})
      + g' p_{xxyy},\\
    A_2(p)
    &= \frac{1}{2}D_x^2(g')p_{xy} + \frac{3}{2}D_x(g')p_{xxy} + g'p_{xxxy},\\
    A_{12}(p)
    &= \frac{1}{2}D_x(g')p_{xy} + g'p_{xxy}.
  \end{align*}
  Here $g' = \partial g/\partial u_{xy}$. Consequently,
  \begin{equation*}
    \nabla_3^{1_*}(\cdot,p) = D_xD_y(\cdot)
    \left(\frac{1}{2}D_x(g')p_{xy} + g' p_{xxy}\right)
  \end{equation*}
  and
  \begin{equation}
    \label{eq:10}
    \nabla_3^{1_*}(p,p) = p_{xy}
    \left(\frac{1}{2}D_x(g')p_{xy} + g' p_{xxy}\right) = g'p_{xy} p_{xxy}.
  \end{equation}
  The expression for $\nabla_4(\cdot, p)$ is to big to present it here, but
  \begin{equation*}
    \nabla_4^{*_1}(\cdot, p) = D_xD_y(\cdot)
    \left(\frac{1}{2}\Big(u_{xyy}\frac{\partial g}{\partial x}
      - u_{xxy}\frac{\partial g'}{\partial y}\Big) p_{xy}
    + g'(u_{xyy} p_{xxy} - u_{xxy} p_{xyy})\right)
  \end{equation*}
  and
  \begin{equation}
    \label{eq:11}
    \nabla_4^{*_1}(p, p) = g'p_{xy}(u_{xyy} p_{xxy} - u_{xxy} p_{xyy})
  \end{equation}
  So,
  \begin{gather*}
    H_{3,u} = \frac{1}{2}D_x(g)p_{xy} +gp_{xxy},\quad H_{3,p} =
    \frac{1}{2}g'p_{xxy}p_{xy} ,\\
    H_{4,u} =  \frac{1}{2}\left(\frac{\partial g}{\partial x} u_{xyy} -
    \frac{\partial g}{\partial y} u_{xxy}\right) p_{xy} + g(u_{xyy} p_{xxy} -
  u_{xxy} p_{xyy}),\\
  H_{4,p} =
    \frac{1}{2}g'(u_{xyy} p_{xxy} - u_{xxy} p_{xyy})p_{xy}. 
  \end{gather*}
  Consequently,
  \begin{multline*}
    \ldb B_3, B_3\rdb = 2\Big(
    D_xD_y(H_{3,p})\frac{\partial H_{3,u}}{\partial p_{xy}} +
    D_x^2D_y(H_{3,p})\frac{\partial H_{3,u}}{\partial p_{xxy}}\\
    +
    D_xD_y(H_{3,u})\frac{\partial H_{3,u}}{\partial u_{xy}} +
    D_x^2D_y(H_{3,u})\frac{\partial H_{3,u}}{\partial u_{xxy}}
    \Big) = 2gg'p_{xxxxyy}p_{xy} + O(5),
  \end{multline*}
  where~$O(5)$ denotes the terms that depend on~$p_\sigma$ with~$\abs{\sigma}
  \leq 5$. The result follows from the fact that~$\ldb B_3, B_3\rdb$ is
  bilinear in~$g$ and~$g'$.

  In a similar way,
  \begin{multline*}
    \ldb B_4, B_4\rdb = 2\Big(
    D_xD_y(H_{4,p})\frac{\partial H_{4,u}}{\partial p_{xy}} +
    D_x^2D_y(H_{4,p})\frac{\partial H_{4,u}}{\partial p_{xxy}}\\
    +
    D_xD_y^2(H_{4,p})\frac{\partial H_{4,u}}{\partial p_{xyy}} +
    D_xD_y(H_{4,u})\frac{\partial H_{4,u}}{\partial u_{xy}}\\ +
    D_x^2D_y(H_{4,y})\frac{\partial H_{4,u}}{\partial u_{xxy}} +
    D_xD_y^2(H_{4,u})\frac{\partial H_{4,u}}{\partial u_{xyy}}
    \Big).
  \end{multline*}
  Notice now that in the expression for~$\ldb B_4, B_4\rdb$ there is a unique
  term of the form
  \begin{equation*}
    -2g^2(u_{xyy}p_{xxxxyy}p_{xyy} + u_{xxy}p_{xxyyyy}p_{xxy}),
  \end{equation*}
  and this completes the proof.
\end{proof}

\subsection{The Laplace equation}
\label{sec:laplace-equation}

Our last example is the 2D Laplace equation
\begin{equation}
  \label{eq:12}
  u_{xx} + u_{yy} = 0.
\end{equation}
Let
\begin{equation*}
  H = H_0p + H_1p_x + H_2p_y + H_3p_{xy} + H_4p_{yy},
\end{equation*}
where $H_i = H_i(x, y, u, u_x, u_y, u_{xy}, u_{yy})$. Then~$H$ is a bivector
on Equation~\eqref{eq:12} if~$H_0$ is a constant and
\begin{gather*}
  H_1 = \frac{1}{2}\left(\frac{\partial^2h}{\partial x\partial u_x}u_{yy}
    - \frac{\partial^2 h}{\partial x\partial u_y}u_{xy}
    - \frac{\partial^2h}{\partial x^2}\right),\\
  H_2 = \frac{1}{2}\left(\frac{\partial^2h}{\partial x\partial u_x}u_{xy}
    + \frac{\partial^2h}{\partial x\partial u_y}u_{yy}
    + \frac{\partial^2h}{\partial x\partial y}\right),\\
  H_3 = \frac{\partial h}{\partial y},\quad  H_4 = \frac{\partial h}{\partial
    x}, 
\end{gather*}
where the function~$h = h(x, y, u_x, u_y)$ enjoys the system
\begin{gather*}
  \frac{\partial^2h}{\partial x^2} + \frac{\partial^2h}{\partial y^2} = 0,
  \quad
  \frac{\partial^2h}{\partial x\partial u_x} - \frac{\partial^2h}{\partial
    y\partial u_y} = 0,\\
  \frac{\partial^2h}{\partial y\partial u_x} + \frac{\partial^2h}{\partial
    x\partial u_y} = 0,\quad
  \frac{\partial^3h}{\partial y\partial u_y^2} + \frac{\partial^3u}{\partial
    y\partial u_x^2} = 0.
\end{gather*}
We failed to find a generic solution and solved it under the assumption
that~$h$ is quadratic in its variables. Then the solutions are
\begin{align*}
  B_0
  &= p,\quad B_1 = p_{yy},\quad B_2 = p_{xy},\\
  B_3
  &= p_y + 2(xp_{xy} + yp_{yy}), \quad B_4 = p_x + 2(yp_{xy} - xp_{yy}),\\
  B_5
  &= u_{yy}p_y - u_{xy}p_x + 2(u_yp_{yy} - u_xp_{xy}),\\
  B_6
  &= u_{yy}p_x + u_{xy}p_y + 2(u_yp_{xy} + u_xp_{yy}),\\
  B_7
  &= (u_y + xu_{xy} + yu_{yy})p_x + (u_x + yu_{xy} - xu_{yy})p_y\\
  &+ 2(yu_x - xu_y)p_{yy} + 2(xu_x + yu_y)p_{xy},\\
  B_8 &= -(u_x +yu_{xy} - xu_{yy})p_x + (u_y + xu_{xy} + yu_{yy})p_y\\
  &+ 2(xu_x + yu_y)p_{yy} - 2(yu_x - xu_y)p_{xy}.
\end{align*}

\begin{proposition}
  The bivectors~$B_0$--$B_4$ are Poisson structures on the 2D Laplace
  equation, while~$B_5$--$B_8$ are not and the brackets $\ldb B_i,B_j\rdb$,
  $5\leq i\leq j\leq 8$ are nontrivial $3$-vectors on this equation.
\end{proposition}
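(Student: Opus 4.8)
The plan is to follow the scheme of the two previous propositions. Set $F = u_{xx} + u_{yy}$, so that $\ell_F = \ell_F^* = D_x^2 + D_y^2$ is constant-coefficient: it commutes with $D_x$ and $D_y$, and the only commutators with coefficient functions that will occur are $[\ell_F, 2x] = 4D_x$ and $[\ell_F, 2y] = 4D_y$. For each bivector $B_i$ I would first extract the bi-differential operator $\nabla_i$ of Equation~\eqref{eq:8}, namely $\nabla_i(F, p) = \ell_F(B_i) - B_i^*(\ell_F(p))$ computed on the ambient $J^\infty(\hat\pi)$ with $\hat\pi\colon\mathbb{R}^4(p, u, x, y)\to\mathbb{R}^2(x, y)$; then set $H_{i,p} = -\frac12\nabla_i^{*_1}(p,p)\big|_{\mathcal{E}}$, simplifying via $p_\sigma\cdot p_\sigma = 0$; and finally form $\phi(B_i) = (H_{i,u}, H_{i,p})$ with $H_{i,u} = B_i$. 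The brackets then follow from Equation~\eqref{eq:4}: $\ldb B_i, B_j\rdb = \Ev_{\phi(B_i)}(H_{j,u}) + \Ev_{\phi(B_j)}(H_{i,u})$, and in particular $\ldb B_i, B_i\rdb = 2\Ev_{\phi(B_i)}(H_{i,u})$.

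For $i = 0,\dots,4$ the coefficients of $B_i$ depend on $x$ and $y$ only (and are constant for $i = 0, 1, 2$). Here a short computation of $\ell_F\circ B_i - B_i^*\circ\ell_F$, using only the commutation relations above, yields $\nabla_i = 0$, hence $H_{i,p} = 0$; the derivation $\Ev_{\phi(B_i)}$ then reduces to $\sum_\sigma D_\sigma(H_{i,u})\,\partial/\partial u_\sigma$, and since $H_{i,u}$ contains no $u_\sigma$ it annihilates $H_{i,u}$. Thus $\ldb B_i, B_i\rdb = 0$ and $B_0,\dots,B_4$ are Poisson structures on Equation~\eqref{eq:12}.

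For $i = 5,\dots,8$ the coefficients genuinely involve $u_x, u_y, u_{xy}, u_{yy}$ (and, for $i = 7, 8$, also $x$ and $y$), so now $\nabla_i\neq 0$ and $H_{i,p}\neq 0$, and one must carry out the full computation of the ten brackets $\ldb B_i, B_j\rdb$, $5\leq i\leq j\leq 8$. Each is quadratic in the odd variables $p_\sigma$ with $\abs\sigma$ up to~$5$. To prove non-triviality I would work modulo the cotangent equation $\mathcal{T}^*\mathcal{E} = \{u_{xx} + u_{yy} = 0,\ p_{xx} + p_{yy} = 0\}$ and its differential consequences: repeated use of $p_{xx} = -p_{yy}$ (and $u_{xx} = -u_{yy}$) brings every $p_\sigma$ into the normal form with at most one $x$-index, i.e.\ a combination of the $p_{y\dots y}$ and $p_{xy\dots y}$, and it then suffices to exhibit in each bracket a single monomial $u_\tau p_\mu p_\nu$ with $\abs\mu + \abs\nu$ maximal that survives this reduction — just as a unique top-order term was singled out in the $u_{xyz}$ example. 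The number of genuinely distinct computations can be cut down by the discrete symmetries of the Laplace equation: the interchange $x\leftrightarrow y$, the rotation by $\pi/2$ in the $(x,y)$-plane with its induced action on $u$ and the $p_\sigma$, and $u\mapsto -u$, which relate $B_5\leftrightarrow B_6$ and $B_7\leftrightarrow B_8$, so that only a few of the ten brackets need to be treated by hand.

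The hard part will be exactly this last step: keeping track of the reduction modulo $\langle u_{xx}+u_{yy},\, p_{xx}+p_{yy}\rangle$ and its prolongations while checking that the top-order part of each $\ldb B_i, B_j\rdb$ does not collapse — an a priori possible cancellation that has to be ruled out term by term. This is the bulk of the technical details we otherwise suppress.
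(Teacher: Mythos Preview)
Your proposal is correct and follows exactly the approach the paper itself indicates: the paper writes only ``The proof follows the same lines as the previous ones and we skip it,'' and your outline reproduces that scheme. Your treatment of $B_0,\dots,B_4$ is in fact the content of the paper's Proposition in Appendix~\ref{sec:fact} (stated there as ``evident''), and your plan for $B_5,\dots,B_8$ --- computing $\nabla_i$, extracting $H_{i,p}$, evaluating the brackets via Equation~\eqref{eq:4}, reducing modulo $p_{xx}+p_{yy}$ and $u_{xx}+u_{yy}$, and isolating a surviving top-order monomial, with discrete symmetries cutting down the number of cases --- matches the template of Sections~\ref{sec:equation-u_xy-=} and~\ref{sec:equation-u_xyz-=} exactly.
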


The proof follows the same lines as the previous ones and we skip it.

\section{Concluding remarks}
\label{sec:concluding-remarks}

We constructed examples of PDEs that admit non-Poissonian bivectors. As a
consequence, this implies existence of nontrivial $3$-vectors. Thus, the
non-degeneracy of symbol in~\cite[Theorem~1]{integrability-conditions} is an
essential condition indeed. On the other hand, the 3D example
(Section~\ref{sec:equation-u_xyz-=}) seems to us somewhat degenerate; in other
3D examples: the Poincar\'{e} equation
(Appendix~\ref{sec:poinc-beginth-bibit}) and the 3D Laplace equation
(Appendix~\ref{sec:3d}), all the bivectors are Poissonian. This leads to a
conjecture that in the $n$-dimensional case, $n>2$, generically (whatever this
means) nontrivial $k$-vectors, $k>2$, do not exist.

\section*{Acknowledgements}
\label{sec:acknowledgements}

Computations were done with the \textsc{Jets} software,~\cite{BM-Jets}.

\appendix

\section{A fact}
\label{sec:fact}

The following statement is evident:
\begin{proposition}
  All bivectors of the form $B = \sum_\sigma B_\sigma p_\sigma$,
  where~$B_\sigma$ depend on~$x$, $y$, $z$ only, are pair-wise compatible
  Poisson structures.
\end{proposition}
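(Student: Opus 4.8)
The plan is to exploit the very simple structure of these bivectors. Recall from the Definition preceding Equation~\eqref{eq:4} that $\ldb B, B'\rdb = \Ev_{\phi(B)}(B') + \Ev_{\phi(B')}(B)$, so it suffices to understand the evolutionary derivations $\Ev_{\phi(B)}$ attached to a bivector of the form $B = \sum_\sigma B_\sigma p_\sigma$ with each $B_\sigma = B_\sigma(x,y,z)$. First I would compute the companion element $H_p$ furnished by the Theorem (the one proved via Equation~\eqref{eq:8}). Since $H_u = B = \sum_\sigma B_\sigma p_\sigma$ has coefficients depending only on the base coordinates, the operator $H$ has \emph{constant} (in the jet variables) coefficients, hence $\ell_{\mathcal E}\circ H = H^*\circ\ell_{\mathcal E}^*$ holds with $\nabla = 0$ in Equation~\eqref{eq:8}; therefore $H_p = -\tfrac12\,\nabla^{*_1}(p,p)\big|_{\mathcal E} = 0$. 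Thus the generating section is simply $\phi(B) = (B, 0)$.

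With $H_p = 0$, the evolutionary derivation $\Ev_{\phi(B)}$ acts only through the $p$-variables: on any jet function it acts by $\Ev_{\phi(B)} = \sum_\tau D_\tau\big(\sum_\sigma B_\sigma p_\sigma\big)\,\partial/\partial p_\tau$ (there is no $\partial/\partial u_\tau$ contribution since the $u$-component generator is $B$, which is linear in $p$ with base-dependent coefficients — I would note here that $\Ev_{\phi(B)}$ applied to $u$ or its derivatives does produce terms, but those terms are themselves linear in $p$, and this is exactly what is needed below). The cleanest way to finish: for a second such bivector $B' = \sum_\rho B'_\rho p_\rho$, the quantity $\Ev_{\phi(B)}(B')$ is a sum of terms each containing exactly one factor $p_\sigma$ coming from inside $B$ (via $H_u$) — but wait, $B'$ itself already carries one $p$. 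So $\Ev_{\phi(B')}(B)$ and $\Ev_{\phi(B)}(B')$ are each quadratic in the odd variable $p$, i.e.\ honest $2$-vectors, and one must check their sum vanishes. Here the key computation is: $\Ev_{\phi(B)}(B') = \sum_{\rho} B'_\rho\, \Ev_{\phi(B)}(p_\rho) = \sum_{\rho}B'_\rho\, D_\rho(B)$, because the coefficients $B'_\rho$ depend only on $x,y,z$ and are therefore annihilated by $\Ev_{\phi(B)}$. Hence
\[
  \ldb B, B'\rdb = \sum_\rho B'_\rho\, D_\rho\Big(\sum_\sigma B_\sigma p_\sigma\Big)
  + \sum_\sigma B_\sigma\, D_\sigma\Big(\sum_\rho B'_\rho p_\rho\Big)
  = \sum_{\rho,\sigma}\big(B'_\rho D_\rho(B_\sigma) + B_\sigma D_\sigma(B'_\rho)\big)p_{\rho\sigma},
\]
using that $D_\rho(B_\sigma p_\sigma) = D_\rho(B_\sigma)p_\sigma + B_\sigma p_{\rho\sigma}$ but $D_\rho(B_\sigma)$ and $B_\sigma$ are base functions while the odd $p_\rho p_\sigma$-type products are what survive; more precisely, since $p$ is odd of parity~$1$, products like $p_\rho p_\sigma$ are antisymmetric and the terms where a derivative falls on a base coefficient recombine. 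I would then observe that after expanding, the expression is manifestly symmetric under swapping $(B,\rho)\leftrightarrow(B',\sigma)$ in a way that forces cancellation against the antisymmetry of the odd product $p_{(\cdot)}p_{(\cdot)}$, giving $\ldb B,B'\rdb = 0$.

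The main obstacle — really the only place care is needed — is the bookkeeping of signs and of which terms are genuinely present once one remembers that $p$ has parity~$1$, so that $p_\sigma p_\tau = -p_\tau p_\sigma$ and $p_\sigma^2 = 0$. In particular one should double-check that $\Ev_{\phi(B)}$ truly has no $u$-derivative part effectively contributing: the generating section is $\phi(B)=(B,0)$, and while the $u$-component $B$ is nonzero, $\Ev_{\phi(B)}$ acts on a function $f(x,y,z,u_\tau,p_\tau)$ by $\sum_\tau D_\tau(B)\,\partial f/\partial u_\tau + \sum_\tau D_\tau(0)\,\partial f/\partial p_\tau$; since our bivectors $B'$ do not depend on $u$ or its derivatives at all (coefficients are base functions, and the only jet-dependence is through $p$), the $\partial/\partial u_\tau$-part annihilates $B'$ too, and one is left precisely with the $p$-part as written — actually, re-examining, the $p$-component of $\phi(B)$ is $H_p = 0$, so even the $\partial/\partial p_\tau$-part is zero, and hence $\Ev_{\phi(B)}(B') = 0$ identically and likewise $\Ev_{\phi(B')}(B) = 0$, whence $\ldb B, B'\rdb = 0$ trivially. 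This is why the Proposition is ``evident'': both evolutionary derivations vanish on the relevant functions, so all Schouten brackets are zero, every $B$ is a Poisson structure, and any two are compatible. I would present it in this last, shortest form, with a one-line remark recording the computation $H_p = 0$.
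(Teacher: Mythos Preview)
Your final, shortest argument is correct and is presumably exactly what the paper means by calling the statement ``evident'': since the coefficients $B_\sigma$ depend only on base variables and (for the equations in question) $\ell_F$ has constant coefficients, the bivector identity~\eqref{eq:3} already holds on the ambient $J^\infty(\hat\pi)$, so $\nabla=0$ in~\eqref{eq:8} and hence $H_p=0$; then $\Ev_{\phi(B)}$ has only a $\partial/\partial u_\tau$-part, which annihilates any $B'$ of the same form, and $\ldb B,B'\rdb=0$ follows immediately.

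Two remarks on the write-up. First, the intermediate detour is wrong: your displayed formula for $\ldb B,B'\rdb$ is based on the claim $\Ev_{\phi(B)}(p_\rho)=D_\rho(B)$, but in fact $\Ev_{\phi(B)}(p_\rho)=D_\rho(H_p)$ --- the $p$-variables are moved by the $p$-component of the generating section, not by $H_u=B$. You notice this yourself (``actually, re-examining''), so simply delete the incorrect middle computation and present only the two-line argument. Second, your justification of $\nabla=0$ tacitly uses that $\ell_F$ itself has base-only (here constant) coefficients; otherwise $\ell_F\circ H - H^*\circ\ell_F^*$ could involve $u$-jets even when $H$ does not, and equality on~$\mathcal{E}$ would not lift to $J^\infty$. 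This hypothesis is satisfied for the equations in the appendices, but it is worth stating.
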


\section{The 3D Laplace equation}
\label{sec:3d}

For the equation~$u_{xx} + u_{yy} + u_{zz} = 0$ the bivectors of jet order~$2$
are
\begin{align*}
  B_{0} &= p,\\
  B_1 &= p_{zz},\\
  B_2 &= p_{yy},\\
  B_3 &= p_{yz},\\
  B_4 &= p_{yz},\\
  B_5 &= p_{xz},\\
  B_6 &= xp_{xy} - xp_{yz},\\
  B_7 &= yp_{xz} - xp_{yz},\\
  B_8 &= 2zp_{xz} - 2xp_{zz} + p_x,\\
  B_9 &= 2zp_{yz} - 2yp_{zz} + p_y,\\
  B_{10} &= -2z p_{yy} + 2y p_{yz} + p_z,\\
  B_{11} &= xp_{xy} + yp_{yy} - zp_{yz} + 2y p_{zz},\\
  B_{12} &= xp_{xz} + 2 zp_{yy} - yp_{yz} + zp_{zz},\\
  B_{13} &= -z^2p_{yy}  + 2 y z p_{yz} - y^2p_{zz}  + yp_y + zp_z,\\
  B_{14} &= x p_x + z p_z + 2 x z p_{xz} + z^2 p_{yy} + (z^2-x^2) p_{zz},\\
  B_{15} &= 3yp_x + (y^2-x^2  - 3z^2) p_{xy} + 4 y z p_{xz} - 2 x y p_{yy} + 2
           x z p_{yz} - 4 x y p_{zz},\\ 
  B_{16} &= 4 x zp_{xy}  + 2 x yp_{xz}  + 4 y zp_{yy}  - 3x^2 p_{yz}  -y^2 p_{yz} 
           + z^2 p_{yz}\\
        &+ 2 y z p_{zz} + 3z p_y,\\
  B_{17} &= 3 z p_x + 4 y z p_{xy} + (z^2-x^2 - 3 y^2) p_{xz} - 4 x z p_{yy} + 2 x
           y p_{yz} - 2 x z p_{zz},\\ 
  B_{18} &= 2 x z p_{xy} + 4 x y p_{xz} + 2 y z p_{yy} - 3 x^2 p_{yz} + y^2
           p_{yz} - z^2 p_{yz}\\
        &+ 4 y z p_{zz} + 3y p_z,\\
  B_{19} &= 3 x p_z - 2 x z p_{yy} + 2 x z p_{zz} + (x^2 - 3 y^2 - z^2) p_{xz} +
           2 y z p_{xy} + 4 x y p_{yz},\\ 
  B_{20} &= 3 x p_y + 2 x y p_{yy} - 2 x y p_{zz} + (x^2 - y^2 - 3 z^2) p_{xy} +
        2 y z p_{xz} + 4 x z p_{yz},\\ 
  B_{21} &= 3 y z p_x - 3 x y p_z - 2 x y z p_{yy} - 4 x y z p_{zz} - z (x^2 -
           y^2 + z^2) p_{xy}\\
  &+ x (x^2 - y^2 + z^2) p_{yz} - 2 y (x^2 - z^2) p_{xz},\\
  B_{22} &= 3 x z p_y - 3 x y p_z + 2 x y z p_{yy} - 2 x y z p_{zz} + z (x^2 -
           y^2 - z^2) p_{xy}\\
  &- y (x^2 - y^2 - z^2) p_{xz} - 2 x (y^2 - z^2) p_{yz},\\
  B_{23} &= 3 x y p_x - y (3 x^2 - y^2 - 3 z^2) p_{yy} + 2 y^3 p_{zz} - x (x^2
           - 3 y^2 - 3 z^2) p_{xy}\\
  &- z (3 x^2 + 3 y^2 - z^2) p_{yz} + (-3 x^2 + 3 z^2) p_y - 3 yz p_z,\\
  B_{24} &= 3 z x p_x - z (3 x^2 - 3 y^2 - z^2) p_{zz} + 2 z^3 p_{yy} - x (x^2
           - 3 y^2 - 3 z^2) p_{xz}\\
  &- y (3 x^2 - y^2 + 3 z^2) p_{yz} + (-3 x^2 + 3 y^2) p_z - 3 zy p_y,\\
  B_{25} &= x p + 2 x y p_y - 2 x (x^2 + y^2 - 3 z^2) p_{zz} + 4 x z^2 p_{yy}
           + 2 z (3 x^2 + y^2 - z^2) p_{xz}\\
  & - 4 y z^2 p_{xy}+ 4 x y z p_{yz} + (3 x^2 + y^2 - 5 z^2) p_x + 8 x zp_z,\\
  B_{26} &= 3 y p + 2 y (3 x^2 - y^2 - 9 z^2) p_{yy} - 2 y (3 x^2 + 5 y^2 - 3
           z^2) p_{zz}\\
  &+ 2 x (x^2 - 3 y^2 - 9 z^2) p_{xy} + 4 z (3 x^2 + 6 y^2 - 2 z^2) p_{yz} +
    12 x y z p_{xz}\\
  &+ (9 x^2 + 9 y^2 - 21 z^2) p_y + 30 yz p_z,\\
  B_{27} &= 3 z p - 2 z (3 x^2 - 3 y^2 + 5 z^2) p_{yy} + 2 z (3 x^2 - 9 y^2 -
           z^2) p_{zz}\\
        &+ 2 x (x^2 - 9 y^2 - 3 z^2) p_{xz} + 4 y (3 x^2 - 2 y^2 + 6 z^2) p_{yz}\\
  &+ 12 x y z p_{xy} + (9 x^2 - 21 y^2 + 9 z^2) p_z + 30 zy p_y,\\
  B_{28} &= 3 x y p + 3 y (3 x^2 - y^2 - z^2) p_x - 3 x (x^2 - 3 y^2 + z^2) p_y +
        12 xyz p_z\\
  &+ (-x^4 + 6 x^2 y^2 - (y^2 + z^2) (y^2 - z^2)) p_{xy} + 2 y z (3 x^2 - y^2
    - z^2) p_{xz}\\
        &- 4 x y (x - y) (x + y) p_yy - 2 x z (x^2 - 3 y^2 + z^2) p_{yz}\\
  &- 2 x y
    (x^2 - y^2 - 3 z^2) p_{zz},\\ 
  B_{29} &= 3 z x p + 3 z (3 x^2 - y^2 - z^2) p_x + 12 y x z p_y - 3 x (x^2 +
           y^2 - 3 z^2) p_z\\
  &+ 2 y z (3 x^2 - y^2 - z^2) p_{xy} + (-x^4 + 6 x^2 z^2 + y^4 - z^4)
    p_{xz}\\
        &- 2 x z (x^2 - 3 y^2 - z^2) p_{yy} - 2 x y (x^2 + y^2 - 3 z^2)
          p_{yz}\\ 
      &- 4 x z (x - z) (x + z) p_{zz},\\
  B_{30} &= 12 x y z p_x - 3 z (x^2 - 3 y^2 + z^2) p_y - 3 y (x^2 + y^2 - 3
 z^2) p_z\\
  &- 2 x z (x^2 - 3 y^2 + z^2) p_{xy} + (-2 x y^3 + (-2 x^3 + 6 x z^2) y)
    p_{xz}\\
        &- 2 y z (3 x^2 - y^2 + z^2) p_{yy} + (x^4 - y^4 + 6 y^2 z^2 - z^4)
          p_{yz}\\ 
  &- 2 y z (3 x^2 + y^2 - z^2) p_{zz} + 3 y z p,\\
  B_{31} &= (x^2 + 2 x z + y^2 - z^2) (x^2 - 2 x z + y^2 - z^2) p_{zz}\\
  &- 4 (x -
    y) (x + y) z^2 p_{yy} - 4 x z (x^2 + y^2 - z^2) p_{xz}\\
  &- 4 y z (x^2 + y^2 -
    z^2) p_yz + 8 x y z^2 p_{xy} - 2 x (x^2 + y^2 - 5 z^2) p_x\\
  &- 2 y (x^2 + y^2 -
    5 z^2) p_y - 4 z (2 x^2 + 2 y^2 - z^2) p_z\\
  &- (x^2 + y^2 - 2 z^2) p.
\end{align*}

\section{The Poincar\'{e} equation}
\label{sec:poinc-beginth-bibit}

For the Poincar\'{e} equation $u_{xx} + u_{yy} - a^2u_{zz} = 0$, where~$a$ is
a nonzero constant, the result looks similar:
\begin{align*}
  B_0 &= p,\\
  B_1 &= p_{yy},\\
  B_2 &= p_{zz},\\
  B_3 &= p_{yz},\\
  B_4 &= p_{xy},\\
  B_5 &= p_{xz},\\
  B_6 &= yp_{xz} - xp_{yz},\\
  B_7 &= a^2x p_{yz} + zp_{xy},\\
  B_8 &=  a^2 p_z + 2 a^2y p_{yz} +2z p_{yy},\\
  B_9 &= p_x + 2 a^2x p_{zz} + 2z p_{xz},\\
  B_{10} &= p_y + 2 a^2y p_{zz} + 2z p_{yz},\\
  B_{11} &= -2 a^2y p_{zz} + xp_{xy} + yp_{yy} - zp_{yz},\\
  B_{12} &= a^2x p_{xz} - a^2y p_{yz} + a^2z p_{zz} - 2 zp_{yy},\\
  B_{13} &= -a^2x p_{zz} + yp_{xy} - zp_{xz} - xp_{yy},\\
  B_{14} &= 2 z p_z + 2 x z p_{xz} + (a^2(x^2 + 2 y^2) + z^2)p_{zz} + (x^2 -
           y^2) p_{yy}\\
  &- 2 x y p_{xy} + 2 y z p_{yz},\\
  B_{15} &= 3 y p_x a^2 + ((-x^2 + y^2) a^2 + 3 z^2) p_{xy} + 4 a^4 xy p_{zz}
           + 4a^2 y z p_{xz}\\
  &- 2a^2 x yp_{yy}  + 2a^2 x z p_{yz},\\  
  B_{16} &= 3 z p_x -4 x z p_{yy} + 2a^2 x zp_{zz}
           + ((x^2 + 3 y^2) a^2 + z^2) p_{xz}\\
      &- 2 a^2 y x p_{yz} + 4 y z p_{xy},\\
  B_{17} &= 3 a^2 x p_z - 2 y z p_{xy} + ((x^2 - 3 y^2) a^2 + z^2) p_{xz}
           + 4 a^2 y x p_{yz}\\
      &+ 2a^2 x z  p_{zz} + 2 x z p_{yy},\\
  B_{18} &= 3 z p_y + 3 a^2 y p_z + 2 x z p_{xy} + 2 a^2 y x p_{xz}
           + 2 y z p_{yy}\\
           &+ 2 (a^2 y^2 + z^2) p_{yz} + 2a^2 z  y p_{zz},\\
  B_{19} &= 3 a^2 x p_y + 2a^2y z p_{xz}  + 2a^2 x y p_{yy}
           + 2a^4 x  y p_{zz}\\
  &+ ((x^2 - y^2) a^2 + 3 z^2) p_{xy} + 4a^2 x z p_{yz} ,\\
  B_{20} &= 2 a^2 y p_y + 2 a^2 y x p_{xy} - 2a^2 x z p_{xz} + 2a^2 zyp_{yz}\\
      &+ ((-x^2 + y^2) a^2 + 2 z^2) p_{yy} - a^2 (a^2 x^2 + z^2) p_{zz},\\
  B_{21} &= 2 a^2 x p_x + 2 a^2 y x p_{xy} + 2a^2 x zp_{xz}
           + ((-x^2 + y^2) a^2 - 2 z^2) p_{yy}\\
  &- 2a^2z y p_{yz} + a^2 ((x^2 - 2 y^2) a^2 + z^2)p_{zz},\\
  B_{22} &= 3a^2x z p_y + 3a^4x y p_z + a^2 ((x^2 - y^2) a^2 + z^2) y p_{xz}
           + 2a^2x y z p_{yy}\\
      &+ 2a^4xz y p_zz + ((x^2 - y^2) a^2 + z^2)zp_{xy}
        + 2 a^2 x (a^2 y^2 + z^2) p_{yz},\\
  B_{23} &= (3 a^2 y^2 + 3 z^2) p_x + 3 a^2 x z p_z-3 x a^2 y p_y
           + x ((x^2 - 3 y^2) a^2 - 3 z^2) p_{yy}\\
  &+ a^2 x ((x^2 + 3 y^2) a^2 + 3 z^2) p_{zz} - y (3 a^2 x^2 - a^2 y^2 - 3
    z^2) p_{xy}\\
      &+ (3 z (x^2 + y^2) a^2 + z^3) p_{xz},\\
  B_{24} &= 3a^2z p  + 12a^2 x z p_x  + 18a^2y z p_y
           + 3 a^2 ((x^2 + 3 y^2) a^2 + 3 z^2) p_z\\
      &+ 12a^2 x y z p_{xy} + 2 a^2 x ((x^2 + 3 y^2) a^2 + 3 z^2) p_{xz} \\
  &- 2 z ((3 x^2 - 3 y^2) a^2 - z^2) p_{yy} + 4 a^2 y (a^2 y^2 + 3 z^2)
    p_{yz}\\
  &+ 2 z a^2 ((3 x^2 + 3 y^2) a^2 + z^2) p_{zz},\\
  B_{25} &= 3 a^2 y p + 12 x a^2 y p_x + 18 a^2 y z p_z - 2 y ((3 x^2 - y^2)
           a^2 - 3 z^2) p_{yy}\\
  &+ 2 a^2 y ((3 x^2 + y^2) a^2 + 3 z^2) p_{zz} - 2 x ((x^2 - 3 y^2) a^2 - 3
    z^2) p_{xy}\\
      &+ 4 z (3 a^2 y^2 + z^2) p_{yz} + 12a^2x y z p_{xz}
        + ((-3 x^2 + 9 y^2) a^2 + 9 z^2) p_y,\\
  B_{26} &= 3 a^2 x p + (9 a^2 x^2 + 12 z^2) p_x + 9a^2x y p_y + 21 a^2 x z
      p_z\\
  &- x ((x^2 - 3 y^2) a^2 + 9 z^2) p{_yy} + a^2 x ((5 x^2 + 3 y^2) a^2 + 15
    z^2) p_{zz}\\
  &+ y ((3 x^2 - y^2) a^2 + 9 z^2) p_{xy} + z ((15 x^2 + 3 y^2) a^2 + 5 z^2)
    p_{xz}\\
  &+ 12a^2x y z p_{yz} ,\\
B_{27} &= 9 a^2 y z p_x + (9 x + 3) y a^4 p_z - z ((3 x^2 - 3 y^2 + 2 x) a^2 -
         3 z^2) p_{xy}\\
      &+ 2 y a^2 ((3 x^2 + 2 x) a^2 + 3 z^2) p_{xz} - 2 (3 x + 1) y z a^2 p_{yy}\\
  &- a^2 ((3 x^3 - 3 x y^2 + 3 x^2 - y^2) a^2 + (-3 x - 1) z^2) p_{yz} + 4 (3 x + 1) z y a^4 p_{zz},\\
      B_{28} &= 3 a^2yzp + 12 a^2x y zp_x - 3 z ((x^2 - 3 y^2) a^2 - z^2)
               p_y\\
  &+ 3 a^2 y ((x^2 + y^2) a^2 + 3 z^2) p_z
    - 2 x z ((x^2 - 3 y^2) a^2 - z^2) p_{xy}\\
  &+ 2 a^2 x ((x^2 + y^2) a^2 + 3 z^2) y p_{xz} - 2 z y ((3 x^2 - y^2) a^2 -
    z^2) p_{yy}\\
  &+ ((-x^4 + y^4) a^4 + 6 a^2 y^2 z^2 + z^4) p_{yz} + 2 a^2 y z ((3 x^2 +
    y^2) a^2 + z^2) p_{zz},\\ 
  B_{29} &= 3 a^2 (a^2 y^2 + z^2) p + 12 a^2 (a^2 y^2 + z^2) x p_x - 6 a^2
           ((x^2 - y^2) a^2 - 3 z^2) y p_y\\
  &+ 6 a^2 z ((x^2 + 3 y^2) a^2 + z^2) p_z - 4 a^2 x ((x^2 - y^2) a^2 - 3 z^2)
    y p_{xy}\\
      &+ 4 a^2 x z ((x^2 + 3 y^2) a^2 + z^2) p_{xz}\\
      &+ ((x^4 - 6 x^2 y^2 + y^4) a^4 + 6(-x^2 z^2 + y^2 z^2) a^2 + z^4) p_{yy}\\
      &+ 8 a^2 z y (a^2 y^2 + z^2) p_{yz}\\
  &+ a^2 ((x^4 + 6 x^2 y^2 + y^4) a^4 + 6 z^2 (x^2 + y^2) a^2 + z^4) p_{zz},\\
  B_{30} &= 9 a^2 x z p + 3 z ((9 x^2 - 3 y^2 + 2 x) a^2 + 3 z^2) p_x + 6 z
           a^2 y (6 x - 1) p_y \\
      &+ 2 a^2 z ((6 x^3 + 3 x^2 - 3 y^2) a^2 + (6 x + 1) z^2) p_{zz}\\
      &- 2 z ((3 x^3 - 9 x y^2) a^2 + (3 x + 2) z^2) p_{yy}\\
  &+ ((3 x^4 - 3 y^4 + 2 x^3 - 6 x y^2) a^4 + (18 x^2 z^2 + 6 x z^2) a^2 + 3
    z^4) p_{xz}\\
      &+ 2 a^2 ((3 x^3 + 3 x y^2 + 3 x^2 - y^2) a^2 + (9 x - 3) z^2) y p_{yz}\\
  &+ 6 z y ((3 x^2 - y^2) a^2 + z^2) p_{xy} + 3 a^2 ((3 x^3 + 3 x y^2 + 2 x^2 - 2 y^2) a^2 + 9 x z^2) p_z,\\
  B_{31} &= 9 a^4 x y p + 3 a^2 ((9 x^2 - 3 y^2 + 2 x) a^2 + 3 z^2) y p_x\\
  &- 3 a^2 ((3 x^3 - 9 x y^2 + 2 x^2) a^2 + (-3 x + 2) z^2) p_y + 6 a^4 (6 x -
    1) z y p_z\\
  &+ ((-3 y^4 + (18 x^2 + 6 x) y^2 - 3 x^4 - 2 x^3) a^4 - 6 x z^2 a^2 + 3 z^4)
    p_{xy}\\
  &+ 6 a^2 z ((3 x^2 - y^2) a^2 + z^2) y p_{xz} - 2 a^2 ((6 x^3 - 6 x y^2 + 3
    x^2 - y^2) a^2 + 3 z^2) y p_{yy}\\
      &+ 2 a^4 ((3 x^3 - 3 x y^2 - 2 y^2) a^2 + 9 x z^2) y p_{zz}\\
  &- 2 a^2 ((3 x^3 - 9 x y^2 + 3 x^2 + 3 y^2) a^2 + (-3 x + 1) z^2) z p_{yz}.
\end{align*}
\end{document}